\def \al {\alpha }
\def \b {\beta}
\def\na  {\nabla}
\def \La {\Lambda}
\def\sseteq {\subseteq}
\def\mk{\medskip}
\def\s{\sigma}
\def\mk{\medskip}
\def\g {\gamma}
\def\P {{\mathcal P}}
\def\na {\nabla}
\def\mk {\medskip}
\def\bk {\bigskip}
\def\T {\mathcal{T}}
\def\D {\Delta}
\def\H {\mathcal{H}}
\def\B {\mathcal{B}}
\newcommand{\R}{{\mathbb R}}
\newcommand{\N}{{\mathbb N}}
\newtheorem{teo}{Theorem }[section]
\newtheorem{prop}[teo]{Proposition }
\newtheorem{rem}[teo]{Remark }
\newtheorem{lem}[teo]{Lemma }
\newtheorem{cor}[teo]{Corollary }
\newtheorem{defi}[teo]{Definition }
\begin{document}

\title{Tangential approximation of analytic sets }


\author{M. Ferrarotti} 
\address{Dipartimento di Matematica\\Politecnico di Torino\\Corso Duca
degli Abruzzi 24\\I-10129 Torino, Italy}
\email{ferrarotti@polito.it}
\thanks{This research was partially supported by M.I.U.R. and by G.N.S.A.G.A}

\author{E. Fortuna}
\address{Dipartimento di Matematica\\Universit\`a di Pisa\\Largo
  B. Pontecorvo 5\\I-56127 Pisa, Italy}
\email{elisabetta.fortuna@unipi.it}

\author{L. Wilson}
\address{Department of Mathematics\\University of Hawaii, Manoa\\Honolulu, HI 96822, USA}
\email{les@math.hawaii.edu}


\date{May 12, 2019}

\begin{abstract}

Two subanalytic subsets of $\R^n$ are called $s$-equivalent at a common point $P$
if the Hausdorff distance between their intersections with the sphere centered at $P$ of
radius $r$ vanishes to order $>s$ as $r$ tends to $0$.
 In this work we strengthen this notion in the case of real subanalytic subsets of $\R^n$
with isolated singular points, introducing the
notion of tangential $s$-equivalence at a common singular point which considers also the distance
between the tangent planes to the sets near the point. We prove that, if $V(f)$ is the zero-set
of an analytic map $f$ and if we assume that $V(f)$ has an isolated singularity, 
 say at the origin $O$, then for any $s\geq 1$
the truncation of the Taylor series of $f$ of sufficiently high order defines an algebraic set
with isolated singularity at $O$ which is tangentially $s$-equivalent to $V(f)$.

\end{abstract}

\maketitle 

\section{Introduction}\label{intro}

If $A$ and $B$ are two closed subanalytic subsets of $\R^n$, the Hausdorff distance 
between their intersections with the sphere of radius $r$ centered 
at a common point $P$ can be used  to ``measure'' how 
near the two sets  are at $P$.  We say that $A$ and $B$ are 
$s$--equivalent (at $P$) if the previous  distance tends to $0$ more rapidly than $r^s$ 
(if so, we write $A \sim_sB$).

In the papers  \cite{FFW-SNS}, \cite{FFW-germs}, \cite{FFW-semi} and  \cite{FFW-dim} we addressed 
the question of the existence of an algebraic representative $Y$ in the class of 
$s$--equivalence of a given subanalytic set $A$ at a fixed point $P$. In this case we also 
say that $Y$ $s$-approximates $A$.

The answer to the previous  question is in general negative for subanalytic sets. However,  for any real number $s\geq 1$ and for any closed semianalytic set $A \subset \R^n$ of
codimension $\geq 1$, there exists an algebraic subset
$Y$ of $\R^n$ such that $A \sim_sY$ (and $Y$ can be chosen of the same dimension as $A$; see  \cite{FFW-semi} and  \cite{FFW-dim}). 

In this paper we define a similar but stronger and geometrically significant equivalence relation: we say that two subanalytic sets $A$ and $B$ having an isolated singularity at $P$ are tangentially $s$-equivalent if  not only the points but also the tangent spaces to $A$ and $B$ are sufficiently ``close" of order $s$ near $P$.

If $V$ is the zero set of an analytic map $f:\R^n\rightarrow \R^p$ such that $f(P)=0$ and if $V$ is the closure of its regular points, then in  \cite{FFW-germs} we showed that for $k\in \N$ sufficiently large $V\sim_s V^k$, where $V^k$ is the zero set of the $k$-th Taylor polynomial of $f$ at $P$;
in fact the same is true for any representative of the $k$-jet of $f$ at $P$. 

In the present work we prove in Theorem \ref{main}  that, for any analytic map $f$ defining an analytic set $V$ with an isolated singularity at $O$ and for any $s$, 
the truncation of the Taylor series of $f$ of sufficiently high order defines an algebraic set
with isolated singularity at $O$ which is tangentially $s$-equivalent to $V$.

\section{Basic notions and notation}\label{basic}

In this section we recall the definition and
some basic properties of $s$-equivalence of subanalytic sets at a common point which,
without loss of generality, we can assume to be the origin $O$ of $\R^n$.

If $A$, $B$ are non-empty compact subsets of $\R^n$,
let $\delta (A,B) = \sup _{x\in B} d(x,A) $, where $d(x,A) =\inf _{y\in A}\|x-y\|$. Thus, if we denote  by $D(A,B)$
the classical Hausdorff distance between the two sets, we have that
$$D(A,B)= \max  \{\delta (A,B),\ \delta (B,A)\}.$$

\begin{defi} Let $A$ and $B$ be closed subanalytic subsets
of $\R^n$ with $O\in A\cap B$. Let $s$ be a real number $\geq 1$. Denote by $S_r$ the
sphere of radius $r$ centered at the origin.
\begin{enumerate}
\item[(a)]  We say that $A\leq_s B$  if one of the following conditions holds:
\begin{enumerate}
\item[(i)] $O$ is isolated  in $A$,
\item[(ii)] $O$ is non-isolated both in $A$ and in $B$ and
$$\lim_{r\to 0}\frac{\delta(B \cap S_r,A\cap S_r)}{r^s} =0.$$
\end {enumerate}
\item[( b)] We say that $A$ and
$B$ are $s$--equivalent  (and we will write $A \sim_sB$) if
$A\leq_s B$ and $B\leq_s A$.
\end{enumerate}
\end{defi}

It is easy to check that $\leq_s$ is transitive and that $\sim_s$ is an
equivalence relation.

Let  $B(O,R)$ denote the open ball centered at $O$ of radius $R$.
Observe that, if there exists $R>0$ such that $A\cap B(O,R) \subseteq B$,
then $A \leq_sB$ for any $s\geq 1$.
\smallskip

A useful tool to test the $s$-equivalence of two subanalytic sets is introduced in
the following definition:

\begin{defi}  Let $A$ be  a closed  subanalytic subset
of $\R^n$, $O\in A$. For any real positive constant $ \sigma$,
we will call  \emph{horn-neighbourhood} with center $A$ and exponent $ \sigma$ the set
$$\mathcal H(A,\sigma)= \{ x\in \R^n \ |\  d(x,A)<  \|x\|^\sigma\}.$$
\end{defi}

Observe that, if $\sigma \geq 1$, then 
$$\mathcal H(A,\sigma)\cap B(O,1)= \{ x\in B(O,1) \ |\  \exists y \in A\setminus \{O\}
\mbox{ such that } \|x-y\|<  \|x\|^\sigma\}.$$

\begin{prop}\label{horn-lemma} (\cite{FFW-germs}) Let $A,B$ be closed subanalytic subsets
of $\R^n$ with $O\in A\cap B$ and let $s \geq 1$. Then
$A\leq_s B$ if and only if there exist real constants $R>0$ and $\sigma >s$
such that $$(A \setminus \{O\}) \cap B(O,R)\subseteq
\mathcal H(B,\sigma).$$
\end{prop}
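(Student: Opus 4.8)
The plan is to recast both sides of the equivalence as statements about the asymptotics, as $r\to 0$, of the one-variable function $g(r)=\delta(B\cap S_r,A\cap S_r)$, using two standard facts: on some interval $(0,\epsilon)$ the function $g$ is subanalytic (here one wants $O$ non-isolated in both $A$ and $B$, so that the slices $A\cap S_r$, $B\cap S_r$ are nonempty and the relevant infima and suprema run over compact subanalytic sets), and a subanalytic function of one real variable is, near $0$, either identically $0$ or of the form $c\,r^{\alpha}(1+o(1))$ with $c>0$ and $\alpha\in\Q_{>0}$. In particular $g(r)=o(r^s)$ if and only if $g\equiv 0$ near $0$, or $g(r)<r^{\sigma}$ near $0$ for some rational $\sigma>s$. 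We may assume throughout that $O$ is non-isolated in $A$; otherwise $A\leq_s B$ holds by definition and the horn inclusion is vacuous for small $R$, so there is nothing to prove.

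Suppose first $A\leq_s B$. Then (by definition of $\leq_s$ in the non-isolated case) $O$ is non-isolated also in $B$ and $g(r)=o(r^s)$, so by the dichotomy above there is a rational $\sigma>s$ with $g(r)<r^{\sigma}$ for $r$ small. For $x\in(A\setminus\{O\})\cap B(O,R)$ with $R$ small, put $r=\|x\|$; since $x\in A\cap S_r$ we get
$$d(x,B)\le d(x,B\cap S_r)\le g(r)<r^{\sigma}=\|x\|^{\sigma},$$
i.e. $x\in\mathcal H(B,\sigma)$ (and if $g\equiv 0$ near $0$ then $x\in B\cap S_r\subseteq B$ directly). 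This proves one implication.

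Conversely, assume the horn inclusion with some exponent $\sigma>s$. First, $O$ is non-isolated in $B$: by the observation following the definition of horn-neighbourhood, for every $x\in A\setminus\{O\}$ close to $O$ there is $y\in B\setminus\{O\}$ with $\|x-y\|<\|x\|^{\sigma}$, and letting $x\to O$ forces such $y$ to tend to $O$. Assume, for contradiction, that $A\not\leq_s B$; then $g\not\equiv 0$ near $0$ and $g(r)=c\,r^{\alpha}(1+o(1))$ with $c>0$ and rational $\alpha\le s$. Working near $O$, one can choose (subanalytically in $r$) an arc $x(r)\in A\cap S_r$, $r\in(0,\epsilon)$, with $d(x(r),B\cap S_r)\ge\frac{c}{2}r^{\alpha}$, and, taking nearest points of $B$, a subanalytic arc $y(r)\in B$ with $\|x(r)-y(r)\|=d(x(r),B)<r^{\sigma}$. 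Writing $\rho(r)=\|y(r)\|$ we have $|\rho(r)-r|<r^{\sigma}$, hence $\rho(r)/r\to 1$.

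The crux — and the step I expect to be the main obstacle — is to pass from closeness of $x(r)$ to $B$ to closeness of $x(r)$ to the slice $B\cap S_r$, i.e. to control how $B\cap S_r$ varies with $r$. My plan: since $\rho$ is subanalytic with $\rho(r)/r\to 1$, it is monotone increasing near $0$, so it has a subanalytic inverse $r(\rho)$ with $r(\rho)/\rho\to 1$, and then $\widetilde y(\rho):=y(r(\rho))$ is a subanalytic arc in $B$ with $\|\widetilde y(\rho)\|=\rho$, i.e. $\widetilde y(\rho)\in B\cap S_{\rho}$ for every small $\rho$. The point is that such an arc is Lipschitz near $0$: a one-variable subanalytic arc all of whose components are $O(\rho)$ has bounded derivative (the monomial asymptotics of a one-variable subanalytic function may be differentiated), so $\|\widetilde{y}'(\rho)\|\le L$ for $\rho$ small. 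Since $y(r)=\widetilde y(\rho(r))$ while $\widetilde y(r)\in B\cap S_r$, we obtain
$$d(x(r),B\cap S_r)\le\|x(r)-\widetilde y(r)\|\le\|x(r)-y(r)\|+\|\widetilde y(\rho(r))-\widetilde y(r)\|<r^{\sigma}+L\,|\rho(r)-r|<(1+L)\,r^{\sigma}.$$
Because $\sigma>s\ge\alpha$, the right-hand side is $o(r^{\alpha})$, contradicting $d(x(r),B\cap S_r)\ge\frac{c}{2}r^{\alpha}$; hence $A\leq_s B$. (The Lipschitz estimate is essentially the uniform transversality of small spheres to $B$; alternatively one can derive it from a Whitney-regular subanalytic stratification of $B$ along $O$.)
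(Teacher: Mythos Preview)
The paper does not give a proof of this proposition at all: it is quoted from \cite{FFW-germs} and used as a black box, so there is no argument in the paper to compare yours with.

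Your proof is correct. The forward implication is immediate once you know the one-variable subanalytic dichotomy for $g(r)=\delta(B\cap S_r,A\cap S_r)$. For the converse you identify the real issue precisely, namely upgrading $d(x,B)<\|x\|^\sigma$ to $d(x,B\cap S_{\|x\|})=o(\|x\|^s)$, and your reparametrization trick handles it: select subanalytically a worst arc $x(r)\in A\cap S_r$, a nearest-point arc $y(r)\in B$, invert the monotone subanalytic radius $\rho(r)=\|y(r)\|$ to get $\widetilde y(\rho)\in B\cap S_\rho$, and compare $\widetilde y(r)$ with $y(r)=\widetilde y(\rho(r))$. The only step that deserves a word of justification is the Lipschitz bound $\|\widetilde y'(\rho)\|\le L$: here you are implicitly using that a bounded one-variable subanalytic function is given near $0$ by a convergent Puiseux series (e.g.\ via \L ojasiewicz's theorem that relatively compact subanalytic subsets of $\R^2$ are semianalytic), so that $\widetilde y_j(\rho)=O(\rho)$ forces the leading exponent to be $\ge 1$ and hence $\widetilde y_j'$ to be bounded. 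With that granted, the contradiction $\tfrac{c}{2}r^\alpha\le (1+L)r^\sigma$ with $\alpha\le s<\sigma$ is clean. The two uses of subanalytic (definable) selection, for $x(r)$ and $y(r)$, are standard.
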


Now, restricting to the case when $O$ is an isolated singularity,  we are going to strengthen the notion of $s$-equivalence imposing additional conditions of differentiable nature.

\begin{defi}\label{IS} Let $A\subseteq \R^n$ be a closed subanalytic set.  
We say that $A$ is  an \emph{isolated singularity subanalytic set} (for short \emph{IS}) of dimension $d$ if the origin $O$ is a non-isolated point of $A$ and $A\setminus\{O\}$ is a submanifold of pure dimension $d$.
\end{defi}

If $A$ is an IS and $x\in A\setminus\{O\}$, let $T_xA$ be the tangent affine subspace to $A\setminus\{O\}$ at $x$.
If we denote with $\mathcal{P}_{n,d}$ the set of affine $d$-dimensional subspaces in $\R^n$, the tangent bundle of $A$ will be the subset
of $\R^n\times\P_{n,d}$ given by
$$\mathcal{T}A = \{(x,T_xA)\ |\ x\in A\setminus\{O\}\}.$$

We define a``distance" between affine subspaces as follows. A \emph{unit basis} of a vector subspace of $\R^n$ is a basis whose elements are unit vectors.
If $T_1,\ T_2\in\P_{n,d}$, for $i=1,2$ let $\nu(T_i)$ be the vector subspace orthogonal to the direction of $T_i$. If $\B_i=\{v^i_1,\dots ,v^i_{n-d}\}$ is a unit basis of $\nu(T_i)$,
let $\D(\B_1,\B_2)=max\{\| v^1_j-v^2_j\| \ |\ j=1,\dots , n-d\}$. Then we set $\D(T_1,T_2)=\inf \D(\B_1,\B_2)$, where $\B_i$ varies among the unit bases of $\nu(T_i)$.


Evidently $\D(T_1,T_2)=0$ if and only if $T_1$ and $T_2$ are parallel.

\begin{defi}\label{tangential-horn} Let $A\subseteq \R^n$ be an IS of dimension $d$ . 
For any real positive constant $\tau$
we call \emph{tangential horn neighbourhood} with center $A$ and exponent $\tau$ the set
$$
\begin{array}{rl}
\T\H(A,\tau) = \{(x,T)\in \R^n\times \P_{n,d}\  |\ & x\in T,\   \exists y\in A\setminus\{O\} \ \textrm{such that}\ \\
& \|x-y\|<\| x\|^\tau,\ \D(T_yA,T)<\|x\|^\tau\}.
\end{array}$$
\end{defi}

For any positive real number $R$ we set 
$$\mathcal{T}_R A = \{(x,T_xA)\ |\ x\in A\setminus\{O\}\cap B(O,R)\}.$$

\begin{defi}\label{super-s} Let $A, B \subseteq \R^n$ be two IS's of the same dimension $d$. If $s\geq 1$, we say that $A$ and $B$ are \emph{tangentially $s$-equivalent}
($A\approx_s B$) if there exist real constants $R>0$ and $\tau>s$ such that $\T_R A\subseteq \T\H(B,\ \tau)$ and
$\T_R B\subseteq \T\H(A,\ \tau)$.
\end{defi}

It is easy to see that, if $A\approx_s B$, then $A\sim_s B$. Moreover, as an immediate consequence of Definition \ref{super-s}, we have that if $A$ and $B$ are tangentially 1-equivalent then they have the same Nash fiber at $O$.

\section{Analytic maps and truncations}

In this section we collect some results that will be used in the final section to prove our main theorem. Some of these propositions are modified versions of results already used in 
\cite{FFW-germs} and \cite{FFW-semi}.
 
If $f\colon\R^n\to \R^p$ is an analytic map, let $V(f)=\{ x\in\R^n\ |\ f(x)=0\}$ denote its
zero-set.

An essential tool that we will repeatedly use in compact neighbourhoods of $O$ is the 
following slightly modified version of the classical \L ojasiewicz' inequality:

\begin{prop}\label{Loj} Let $A$ be a compact subanalytic subset of
$\R^n$. Assume $\varphi$ and $\psi$ are subanalytic functions defined on $A$
such that $\varphi$ is continuous, $V(\varphi) \subseteq V(\psi)$, $\psi$  is continuous at
the points of $V(\psi)$ and such that $\sup |\psi|<1$. Then there exists
a positive constant $\alpha$ such that $|\psi|^\alpha \leq |\varphi|$ on $A$ and $|\psi|^\alpha <
|\varphi|$ on $A\setminus V(\varphi)$.
\end{prop}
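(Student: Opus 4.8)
The plan is to first prove the ordinary \L ojasiewicz estimate $|\psi|^N\le C|\varphi|$ on $A$ for suitable constants $N,C>0$, and then to sharpen it to the stated form by exploiting the bound $\sup_A|\psi|<1$. Granting the estimate, we may assume $\psi\not\equiv 0$ (otherwise the claim is trivial) and set $\mu=\sup_A|\psi|\in(0,1)$. Choosing $t$ large enough that $C\mu^t<1$ and putting $\alpha=N+t$, we get, for every $x\in A$ with $\varphi(x)\ne 0$,
$$|\psi(x)|^{\alpha}=|\psi(x)|^N\,|\psi(x)|^t\le C\,\mu^t\,|\varphi(x)|<|\varphi(x)|,$$
whereas if $\varphi(x)=0$ then $x\in V(\varphi)\subseteq V(\psi)$, so $\psi(x)=0$ and both sides vanish. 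This yields $|\psi|^\alpha\le|\varphi|$ on $A$ and $|\psi|^\alpha<|\varphi|$ on $A\setminus V(\varphi)$, as wanted.

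The only delicate point is that $\psi$ is not assumed continuous on all of $A$, so the subanalytic \L ojasiewicz inequality cannot be applied to the pair $(\varphi,\psi)$ directly; the idea is to pass to the closure of the graph of $\psi$. Let $\Gamma=\{(x,\psi(x))\mid x\in A\}\subseteq\R^n\times\R$; this is a subanalytic set contained in the compact set $A\times[-1,1]$, hence its closure $\ol\Gamma$ is compact and subanalytic. On $\ol\Gamma$ consider the continuous subanalytic functions $u(x,t)=t$ and $v(x,t)=\varphi(x)$, the latter being continuous because $\varphi$ is continuous on $A$, which contains the image of $\ol\Gamma$ under the projection. The key claim is that $V(v)\subseteq V(u)$ in $\ol\Gamma$: if $(x,t)\in\ol\Gamma$ and $\varphi(x)=0$, then $x\in V(\varphi)\subseteq V(\psi)$ and there is a sequence $x_k\in A$ with $x_k\to x$ and $\psi(x_k)\to t$; since $\psi$ is continuous at the point $x$ of $V(\psi)$, $\psi(x_k)\to\psi(x)=0$, so $t=0$. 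Now the classical \L ojasiewicz inequality on the compact subanalytic set $\ol\Gamma$ provides $C,N>0$ with $|u|^N\le C|v|$ on $\ol\Gamma$; evaluating at the points $(x,\psi(x))$ with $x\in A$ gives $|\psi(x)|^N\le C|\varphi(x)|$, which is the estimate used above.

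I expect the graph-closure step to be the heart of the matter: it is precisely what converts the hypothesis ``$\psi$ continuous at the points of $V(\psi)$'' into the inclusion of zero sets needed to invoke \L ojasiewicz, and this hypothesis is genuinely necessary (without it one can take $\psi$ equal to a nonzero constant off a single zero, where no such estimate holds). A direct curve-selection argument bounding $|\psi|/|\varphi|^{1/N}$ near $V(\varphi)$ would be an alternative, but the reduction to $\ol\Gamma$ keeps everything inside the standard continuous \L ojasiewicz framework. The remaining ingredients — relative compactness of $\Gamma$, subanalyticity of its closure, and the exponent/constant bookkeeping in the first paragraph — are routine.
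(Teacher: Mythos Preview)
Your argument is correct: the graph-closure trick cleanly converts the hypothesis that $\psi$ is continuous at the points of $V(\psi)$ into the zero-set inclusion $V(v)\subseteq V(u)$ on the compact subanalytic set $\ol\Gamma$, so the classical (continuous) \L ojasiewicz inequality applies there and pulls back to $|\psi|^N\le C|\varphi|$ on $A$; the absorption of the constant $C$ using $\sup_A|\psi|<1$ is then routine and yields the strict inequality off $V(\varphi)$.

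There is no proof in the paper to compare with: Proposition~\ref{Loj} is stated there as a ``slightly modified version of the classical \L ojasiewicz inequality'' and used as a black box, without argument. Your write-up thus supplies what the paper omits, and does so along the natural lines one would expect --- reduce to the continuous case via the closure of the graph, then tweak the exponent to remove the multiplicative constant.
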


As a consequence of the previous \L ojasiewicz' inequality we get:

\begin{lem}\label{3} 
If  $f=(f_1,\dots ,f_p)\colon\R^n\to \R^p$ is an analytic map, then
\begin{enumerate}
\item there exists $\al\in\R^+$ such that
$\|f(x)\|>d(x,V(f))^\al$ for all $x\in\R^n\setminus V(f)$ near enough to $O$,
\item  there exists $\g\in\R^+$ such that $\| \na f_i(x) - \na f_i(y)\|<\| x-y\|^\g$ for all $i=1,\dots , p$ and for all
$x,\ y\in\R^n$ near enough to $O$ and with $x\neq y$.
\end{enumerate}
\end{lem}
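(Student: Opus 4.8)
The plan is to derive both statements directly from the \L ojasiewicz inequality of Proposition \ref{Loj}, by choosing in each case a suitable compact subanalytic domain $A$ and suitable subanalytic functions $\varphi$, $\psi$. The entire content of the lemma is then the verification that the hypotheses of Proposition \ref{Loj} are satisfied after passing to a sufficiently small neighbourhood of $O$.

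For part (1) I would fix a radius $R\in(0,1)$ and take $A=\ov{B(O,R)}$, $\varphi(x)=\|f(x)\|$, $\psi(x)=d(x,V(f))$. Both $\varphi$ and $\psi$ are continuous and subanalytic: the first because $f$ is analytic, the second because $V(f)$ is analytic, hence subanalytic, and the distance to a subanalytic set is subanalytic (and globally continuous). Since $O\in V(f)$ we have $d(x,V(f))\le\|x\|\le R<1$ on $A$, so $\sup_A|\psi|<1$; moreover $V(\varphi)=V(f)=V(\psi)$, hence $V(\varphi)\subseteq V(\psi)$. Proposition \ref{Loj} then produces $\al>0$ with $|\psi|^\al<|\varphi|$ on $A\setminus V(\varphi)$, which is precisely $d(x,V(f))^\al<\|f(x)\|$ for every $x\in\ov{B(O,R)}\setminus V(f)$.

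For part (2) I would work in $\R^n\times\R^n$. Using that each $\na f_i$ is continuous at $O$, I first pick $R>0$ small enough that $\|\na f_i(x)-\na f_i(y)\|<1$ for all $i$ and all $x,y\in\ov{B(O,R)}$. Then I set $A=\ov{B(O,R)}\times\ov{B(O,R)}$, which is compact and subanalytic in $\R^{2n}$, and $\varphi(x,y)=\|x-y\|$, $\psi(x,y)=\max_{1\le i\le p}\|\na f_i(x)-\na f_i(y)\|$; both are continuous and subanalytic because the $\na f_i$ are analytic. The zero-set $V(\varphi)$ is the diagonal $\{(x,x)\}$, which is contained in $V(\psi)$, and $\sup_A|\psi|<1$ by the choice of $R$. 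Proposition \ref{Loj} then yields $\al>0$ with $|\psi|^\al<|\varphi|$ off the diagonal, i.e. $\|\na f_i(x)-\na f_i(y)\|^\al<\|x-y\|$ for every $i$ and every $x\ne y$ in $\ov{B(O,R)}$; taking $\g=1/\al$ gives the claim.

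I do not expect a serious obstacle: the argument is just a repackaging of two \L ojasiewicz estimates. The only points to be careful about are the normalization hypothesis $\sup|\psi|<1$ of Proposition \ref{Loj} — which forces the preliminary shrinking of $R$, trivial for part (1) and a consequence of the continuity of the $\na f_i$ at $O$ for part (2) — and the routine observation that all the functions in play are subanalytic because $f$ is analytic.
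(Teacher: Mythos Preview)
Your proposal is correct and follows essentially the same approach as the paper, which simply invokes Proposition~\ref{Loj} for both parts; your write-up just supplies the details the paper omits. The only cosmetic difference is that in (2) you apply \L ojasiewicz once to $\psi(x,y)=\max_i\|\nabla f_i(x)-\nabla f_i(y)\|$, whereas the paper applies it to each component separately to get exponents $\gamma_i$ and then sets $\gamma=\min_i\gamma_i$.
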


\begin{proof} (1) It is a straightforward consequence of Proposition \ref{Loj}.\\
(2) By Proposition \ref{Loj}, for any $i$ there is $\g_i>0$ such that  $\| \na f_i(x) - \na f_i(y)\|
<\| x-y\|^{\g_i}$ for all
$x,\ y\in\R^n$ near enough to $O$ and with $x\neq y$.
Then it is enough to take $\g=\min\{\g_i | \ i=1,\dots ,p\}$.
\end{proof}

\begin{rem}\label{rem2} {\rm If $\g$ fulfills Lemma \ref{3}  and if $\g'<\g$, then $\g'$ has the same property.}
\end{rem}

\begin{lem}\label{1+2} Let $A$ be a closed subanalytic set such that $O$ is a non-isolated point of $A$ and let $\varphi \colon\R^n\to \R$ be a continuous subanalytic function such that $\varphi(x)>0$ if $x\in A\setminus\{O\}$.  Then there exist $\b,\ \s\in\R^+$ such that
$\varphi(y) >\|x\|^\b$ for any $x\in A\setminus\{O\}$ near enough to $O$, and for any $y\in B(x,\| x\|^\s)$.\end{lem}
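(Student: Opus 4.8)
The plan is to combine two estimates of \L ojasiewicz type valid near $O$: a lower bound for $\varphi$ along $A$, and a H\"older-type bound for the variation of $\varphi$; both will be obtained from Proposition \ref{Loj}.

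For the first estimate, note that since $O$ is non-isolated in $A$ it is a limit of points of $A\setminus\{O\}$ at which $\varphi$ is positive, so by continuity $\varphi(O)\ge 0$; hence $\varphi\ge 0$ on $A$ and the zero set of $\varphi$ on $A$ is contained in $\{O\}$. One then applies Proposition \ref{Loj} on the compact subanalytic set $A\cap\overline{B(O,R)}$ (for $R>0$ small), with our $\varphi$ in the role of $\varphi$ there and with $\psi$ a sufficiently small positive multiple of $\|x\|$ (whose zero set on $A$ is also $\{O\}$). This produces constants $\beta_0,R_0\in\R^+$ such that $\varphi(x)\ge\|x\|^{\beta_0}$ for every $x\in A$ with $0<\|x\|\le R_0$.

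The second estimate is the crux, since a priori $\varphi$ is only known to be positive \emph{pointwise} on $A\setminus\{O\}$, whereas we need information at nearby points. Here I would apply Proposition \ref{Loj} on the compact subanalytic set $\overline{B(O,R)}\times\overline{B(O,R)}\subseteq\R^n\times\R^n$, taking $\|x-y\|$ in the role of $\varphi$ and a sufficiently small positive multiple of $|\varphi(x)-\varphi(y)|$ in the role of $\psi$; the required inclusion of zero sets holds because the zero set of $\|x-y\|$ is the diagonal, along which $\varphi(x)=\varphi(y)$. Proposition \ref{Loj} then gives $|\varphi(x)-\varphi(y)|^{\alpha}\le c\,\|x-y\|$ for some $\alpha,c\in\R^+$, hence, absorbing the constant for $\|x-y\|$ small, $|\varphi(x)-\varphi(y)|\le\|x-y\|^{\gamma}$ for some $\gamma\in\R^+$ and all $x,y$ near $O$. (Equivalently, one may just invoke the standard fact that a continuous subanalytic function is locally H\"older near $O$; this is the only non-formal ingredient of the argument.)

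Finally, fix $\sigma\in\R^+$ with $\sigma\gamma>\beta_0$ and put $\beta=\beta_0+1$. For $x\in A\setminus\{O\}$ near enough to $O$ and any $y\in B(x,\|x\|^{\sigma})$, the two estimates give
$$\varphi(y)\ \ge\ \varphi(x)-|\varphi(x)-\varphi(y)|\ >\ \|x\|^{\beta_0}-\|x\|^{\sigma\gamma}\ =\ \|x\|^{\beta_0}\bigl(1-\|x\|^{\sigma\gamma-\beta_0}\bigr)\ \ge\ \tfrac12\|x\|^{\beta_0}\ >\ \|x\|^{\beta}$$
as soon as $\|x\|$ is small enough, since $\sigma\gamma-\beta_0>0$; thus the lemma holds with these $\beta$ and $\sigma$. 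The only real difficulty is the second estimate: passing from the pointwise positivity of $\varphi$ on $A\setminus\{O\}$ to quantitative control, which is what the above application of Proposition \ref{Loj} on $\overline{B(O,R)}\times\overline{B(O,R)}$ provides.
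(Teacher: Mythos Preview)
Your argument is correct, but the route differs from the paper's. After the common first step (getting $\beta_0$ with $\varphi(x)>\|x\|^{\beta_0}$ on $A\setminus\{O\}$ near $O$ via Proposition \ref{Loj}), the paper does \emph{not} prove a H\"older bound for $\varphi$. Instead it introduces the closed subanalytic set $W=\{(x,y)\in A\times\R^n\mid \varphi(y)\le\|x\|^{\beta_0}\}$ and applies Proposition \ref{Loj} a second time to the function $\delta(x)=d\bigl((x,x),W\bigr)$ on $A$, which is continuous, subanalytic, and positive on $A\setminus\{O\}$; this yields $\delta(x)>\|x\|^\sigma$, and a short triangle-inequality argument then shows that $(x,y)\notin W$ whenever $\|x-y\|<\|x\|^\sigma$, i.e.\ $\varphi(y)>\|x\|^{\beta_0}$. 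Your approach replaces this ``distance-to-the-bad-set'' trick by the local H\"older continuity $|\varphi(x)-\varphi(y)|\le\|x-y\|^\gamma$ (obtained from Proposition \ref{Loj} on the product ball), and then makes the choice of $\sigma$ explicit via $\sigma\gamma>\beta_0$. Your route is arguably more transparent and isolates the quantitative input as a modulus of continuity of $\varphi$; the paper's route is a bit slicker and delivers the conclusion with the \emph{same} exponent $\beta_0$ from the first step rather than $\beta_0+1$, though by Remark \ref{rem1} this makes no difference for the applications.
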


\begin{proof} Let $A_0=A\setminus\{O\}$. By Proposition \ref{Loj}, there exists  $\beta >0$
such that $\varphi(x)> \|x\|^\beta$ for any $x\in A_0$ near $O$.

Consider the closed subanalytic set
$$W=\{(x,y)\in A\times \R^n \ |\ \varphi(y)\leq \|x\|^\beta\}.$$

The function $\delta\colon A\to \R$ defined by $\delta(x)= d((x,x),W)$ is subanalytic and continuous on $A$ and positive on $A_0$. Then, again
by Proposition \ref{Loj}, there exists $\s >0$
such that $\delta(x) > \|x\|^\s$ on $A_0$ near $O$.
Hence $(x,y)\notin W$ for any $x\in A_0$ near $O$ and for any $y \in B(x,\|x\|^\s)$: otherwise
there exist a sequence $x_i\in A_0$ converging to $O$ and a sequence $y_i\in  B(x_i,\|x_i\|^{\sigma})$ such that $(x_i, y_i)\in W$. Then
$$\delta(x_i)=d((x_i,x_i),W)\leq \|(x_i,y_i)-(x_i,x_i)\|+d((x_i,y_i),W)=\|x_i-y_i\|<\|x_i\|^\s,$$
a contradiction.

Then for any $x \in A_0$ near $O$ and for any $y \in B(x, \|x\|^\s)$ we have $\varphi(y)>\|x\|^\b.$
\end{proof}

\begin{rem}\label{rem1} {\rm Assume that $\b$ and $\s$ fulfill Lemma \ref{1+2}  and that $\b'>\b$ and $\s'>\s$.  Then $\b'$ and $\s'$ have the same property.}
\end{rem}

Let $\phi\colon\R^n\to\R^p$ be an analytic map and denote by $d_x \phi$ the differential of $\phi$ at $x$. Following \cite{Trotman-Wilson}, consider the function on $\R^n$ defined by
$$\La\phi(x)=\begin{cases} 0 &\text{if}\ rk(d_x\phi)<p\\
\inf\{\|d_x\phi(v)\|\ |\ v\perp\ker(d_x\phi),\|v\|=1\} &\text{if}\ rk(d_x\phi)=p\end{cases}.$$

It can be checked that $\La\phi$ is continuous and subanalytic. \mk

As usual we endow
$\operatorname {Hom} (\R^n,\R^p)$ with the standard norm
$$\|L\|= \max_{u\ne 0}
\frac{\|L(u)\|}{\|u\|}$$
for any linear map $L:\R^n\to\R^p$.

The next proposition is a direct consequence of Proposition 3.3 in \cite{FFW-germs}.

\begin{prop}\label{cont}
Let $\phi$ and $\phi'$ be analytic maps from $\R^n$ to $\R^p$.
If there is a positive function $\epsilon(x)$ such that $\|d_x\phi - d_x\phi'\| \leq \epsilon(x)$ for any $x$, then
$|\La\phi(x)-\La\phi'(x)|\le \epsilon(x)$ for any $x$.
\end{prop}
\mk

Another useful result  we will need is the following:

%

\begin{lem}\label{(3)} \cite[Lemma 3.5]{FFW-germs} 
Let $\phi\colon\R^n \rightarrow \R^p$ be
an analytic map which is a submersion
on an open ball $B(x,\rho)$. Let $r>0$ and assume that
$\Lambda\phi(y)\ge \frac r \rho$ for all $y\in B(x,\rho)$. Then
$\phi(B(x,\rho))\supseteq B(\phi(x),r)$.
\end{lem}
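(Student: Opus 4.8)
The plan is to prove the lemma by lifting the radial segments of the target ball through $\phi$ by means of an ordinary differential equation, so that the statement reduces to an a priori estimate which is immediate from the definition of $\La\phi$.

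After replacing $\phi$ by $w\mapsto \phi(w+x)-\phi(x)$ we may assume $x=O$ and $\phi(O)=0$; this leaves $\ker d_\bullet\phi$ and $\La\phi$ unchanged (up to the same translation of the variable) and turns the conclusion into $\phi(B(O,\rho))\supseteq B(0,r)$. Since $\phi$ is a submersion on $B(O,\rho)$, for every $w\in B(O,\rho)$ the differential $d_w\phi$ is surjective, and its Moore--Penrose pseudoinverse
$$(d_w\phi)^{+}=(d_w\phi)^{*}\bigl(d_w\phi\,(d_w\phi)^{*}\bigr)^{-1}$$
is well defined, depends analytically on $w$, has image contained in $(\ker d_w\phi)^{\perp}$, and satisfies $d_w\phi\circ(d_w\phi)^{+}=\mathrm{id}_{\R^p}$. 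Moreover, if $w\in B(O,\rho)$ and $u\in\R^p$, then $v:=(d_w\phi)^{+}(u)$ lies in $(\ker d_w\phi)^{\perp}$ and $d_w\phi(v)=u$, so by the definition of $\La\phi$ we get $\|u\|=\|d_w\phi(v)\|\ge \La\phi(w)\,\|v\|\ge \tfrac r\rho\,\|v\|$; hence $\|(d_w\phi)^{+}(u)\|\le \tfrac\rho r\,\|u\|$ for all $w\in B(O,\rho)$.

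Now fix $z$ with $\|z\|<r$ and consider the initial value problem $y'(t)=(d_{y(t)}\phi)^{+}(z)$, $y(0)=O$. The right-hand side is analytic, hence locally Lipschitz, on $B(O,\rho)$, so there is a unique maximal solution; as long as it is defined it takes values in $B(O,\rho)$, so by the bound of the previous paragraph $\|y'(t)\|\le \tfrac\rho r\|z\|$ and hence $\|y(t)\|\le \tfrac\rho r\|z\|\,t<\rho$ whenever $t\le 1$. Therefore the trajectory stays in a fixed compact subset of $B(O,\rho)$ for $t\in[0,1]$, and the usual continuation principle shows the solution is defined on all of $[0,1]$. Along it $\frac{d}{dt}\phi(y(t))=d_{y(t)}\phi\bigl(y'(t)\bigr)=z$, so $\phi(y(t))=tz$; in particular $y(1)\in B(O,\rho)$ and $\phi(y(1))=z$. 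Since $z\in B(0,r)$ was arbitrary, $\phi(B(O,\rho))\supseteq B(0,r)$, which is the assertion.

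The only point requiring care is the global existence of the lift on $[0,1]$, and this is exactly what the two hypotheses provide: the submersion assumption makes the pseudoinverse vector field continuous (indeed analytic) on $B(O,\rho)$, while the bound $\La\phi\ge r/\rho$ caps its norm by $\tfrac\rho r\|z\|<\rho$ and so prevents the trajectory from reaching $\partial B(O,\rho)$ before time $1$. Everything else is a one-line computation with $\La\phi$. (One could instead build the lift by a Newton-type iteration along the segment $tz$, but the differential-equation argument is shorter and makes the role of the constant $r/\rho$ transparent.)
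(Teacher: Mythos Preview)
Your argument is correct. The paper itself does not prove this lemma; it only quotes it from \cite[Lemma~3.5]{FFW-germs}, so there is no in-paper proof to compare against. The path-lifting proof you give via the Moore--Penrose pseudoinverse is the standard approach to such quantitative surjectivity statements: the submersion hypothesis makes $(d_w\phi)^+$ analytic on $B(O,\rho)$, the definition of $\Lambda\phi$ yields the operator bound $\|(d_w\phi)^+\|\le \rho/r$, and the resulting a~priori estimate $\|y(t)\|\le \tfrac{\rho}{r}\|z\|\,t<\rho$ for $t\in[0,1]$ confines the trajectory to a compact subset of $B(O,\rho)$, so the continuation principle gives existence up to $t=1$ and $\phi(y(1))=z$. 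Nothing is missing.
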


If $f\colon\R^n\to \R^p$ is an analytic map and $k\in \N$,
we will denote by $T^kf(x)$
the polynomial map whose components are the Taylor polynomials of
order $k$ at $O$ of the components of $f$. Moreover
we set $V=V(f)$, $V_0= V\setminus \{O\}$ and $V^k=V(T^kf)$.

\begin{defi}\label{fIS} If $f\colon\R^n\to \R^p$ is an analytic map, we say that $f$ defines  an \emph{isolated singularity analytic set} (for short, \emph{$f$ defines an IS}) if the origin $O$ is a non-isolated point of $V(f)$ and $f$ is submersive on $V(f)\setminus\{O\}$.
\end{defi}

Evidently, if $f\colon\R^n\to \R^p$ defines an IS, the set $V(f)\setminus\{O\}$ is an analytic submanifold of $\R^n$ of dimension $d=n-p$, i.e. $V(f)$ is an IS of dimension $d=n-p$. 

Observe also that, if $f\colon\R^n\to\R^p$ is an analytic map, then $f$ defines an IS if and only if $O$ is a non-isolated point of $V(f)$ and $\La f$ is positive on $V(f)\setminus \{O\}$.
\smallskip

When $f$ defines an IS, we already know (\cite{FFW-germs}, Corollary 4.2)  that 
$V(f)$ can be approximated of order $s$ by the zero-set of a suitable truncation of the Taylor series of $f$. In the next section we will strengthen this result obtaining a tangential approximation. To do that, we will use two results which are particular cases of arguments used in \cite{FFW-germs}. Since these results do not appear as independent statements in that paper, we conclude this section presenting them with their proofs  for the sake of the reader.

\begin{prop}\label{7} Let $f\colon\R^n\to \R^p$ be an analytic map.
If $\al$  fulfills the thesis of Lemma \ref{3}, then there exists a real constant  $R>0$ such that for any $\s>0$ and for any $k>\al\s$ we have
$$(V^k\setminus \{O\}) \cap B(O,R) \subseteq \H(V,\s).$$
\end{prop}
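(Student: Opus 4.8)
The plan is to show that any point $x\in V^k\setminus\{O\}$ close enough to $O$ lies in the horn-neighbourhood $\H(V,\sigma)$, i.e.\ that $d(x,V)<\|x\|^\sigma$. The natural idea is to start from the fact that $x$ is a zero of $T^kf$ and estimate how far $f$ itself is from vanishing at $x$; then invoke Lemma \ref{3}(1), which converts a bound on $\|f(x)\|$ into a bound on $d(x,V)$ via $d(x,V(f))<\|f(x)\|^{1/\al}$ (valid near $O$, away from $V$). So the crux is to bound $\|f(x)\|$ from above at points $x$ where $T^kf(x)=0$.

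First I would write, for each component $f_i$, the difference $f_i - T^kf_i$. Since $T^kf_i$ is the Taylor polynomial of order $k$ at $O$, the remainder $f_i(x)-T^kf_i(x)$ vanishes to order $k+1$ at $O$; more precisely, on a fixed compact neighbourhood of $O$ there is a constant $C$ with $|f_i(x)-T^kf_i(x)|\le C\|x\|^{k+1}$ for all $i$. Hence at a point $x$ with $T^kf(x)=0$ we get $\|f(x)\|=\|f(x)-T^kf(x)\|\le C'\|x\|^{k+1}$, and after shrinking the neighbourhood we may absorb the constant and simply say $\|f(x)\|\le \|x\|^{k}$ (or any exponent we like below $k+1$; taking exponent $k$ is wasteful but harmless, and it is what makes the hypothesis $k>\al\sigma$ come out cleanly). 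Here one should be a little careful about the trivial cases: if $x\in V$ there is nothing to prove since $d(x,V)=0<\|x\|^\sigma$; so we may assume $x\notin V$, which is exactly the hypothesis under which Lemma \ref{3}(1) applies.

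Now combine the two estimates. For $x\in (V^k\setminus\{O\})\cap B(O,R)$ with $x\notin V$ and $R$ small enough that both Lemma \ref{3}(1) and the remainder bound hold, we get
$$d(x,V)<\|f(x)\|^{1/\al}\le \big(\|x\|^{k}\big)^{1/\al}=\|x\|^{k/\al}.$$
If $k>\al\sigma$ then $k/\al>\sigma$, and since $\|x\|<1$ (shrink $R\le 1$) we have $\|x\|^{k/\al}\le \|x\|^{\sigma}$, so $d(x,V)<\|x\|^{\sigma}$, i.e.\ $x\in\H(V,\sigma)$. Note that $R$ is chosen once and for all (it only depends on $f$ through where the Łojasiewicz-type inequalities of Lemma \ref{3} hold and where the Taylor remainder estimate holds), and then it works simultaneously for every $\sigma>0$ and every $k>\al\sigma$, as claimed.

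The only genuine obstacle is the uniformity of the Taylor remainder estimate across all components $f_i$ and the bookkeeping of which ``near enough to $O$'' conditions are invoked; both are routine, since finitely many analytic functions satisfy a uniform order-$(k+1)$ remainder bound on any fixed compact neighbourhood of $O$, and Lemma \ref{3} already packages the Łojasiewicz input. I expect the write-up to be short.
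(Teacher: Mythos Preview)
Your proposal is correct and follows essentially the same route as the paper: both combine the \L ojasiewicz-type bound $\|f(x)\|>d(x,V)^{\alpha}$ from Lemma~\ref{3}(1) with the Taylor remainder estimate $\|f-T^kf\|=o(\|x\|^{\alpha\sigma})$ for $k>\alpha\sigma$. The only cosmetic difference is that the paper argues by contradiction (if $x\notin\H(V,\sigma)$ then $\|f(x)\|>\|x\|^{\alpha\sigma}$, contradicting the remainder estimate along a sequence in $V^k$), whereas you compute directly $d(x,V)<\|f(x)\|^{1/\alpha}\le\|x\|^{k/\alpha}<\|x\|^{\sigma}$.
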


\begin{proof}
By Lemma \ref{3}, we have that $\|f(x)\|> d(x,V)^\alpha $ for all $x \in \R^n \setminus V$ 
near $O$. Then for $x \not\in
\mathcal H(V,\sigma)$ we have that $\|f(x)\| > \|x\|^{\alpha\sigma}$.

Let $k$ be an integer such that $k > \alpha\sigma$. Then
$$ \lim_{x\to O}\frac{\|f(x)-T^kf(x)\|}{\|x\|^{\alpha\sigma}}=0.$$
It follows  that $V^k\setminus \{O\} \subseteq \mathcal H(V,\sigma)$ near $O$: otherwise
there would exist a sequence of
points $y_i\ne O$ converging to $O$ such that $y_i\in V^k\setminus\mathcal H(V,\sigma)$ and hence
$$ \lim_{i \to \infty}\frac{\|f(y_i)-T^kf(y_i)\|}{\|y_i\|^{\alpha\sigma}}=
\lim_{i \to \infty}\frac{\|f(y_i)\|}{\|y_i\|^{\alpha\sigma}} \geq 1$$
which is a contradiction.
\end{proof}

\begin{prop}\label{5} Let $f\colon\R^n\to \R^p$  be an analytic map which defines an IS. Assume that $\b$ and $\s$ are exponents which satisfy  Lemma \ref{1+2} when we take $A=V(f)$ and $\varphi=\La f$ and let $\s >1$. Then 
\begin{enumerate}
\item[(i)] if $k$ is an integer such that $k>\b+1$, then $\La T^kf(y)>\|x\|^{\b+1}$ for any $x\in V_0$ near $O$ and for any 
$y\in B(x,\|x\|^\s)$; in particular $T^kf$ is a submersion on $B(x,\|x\|^\s)$,
\item[(ii)] there exists a real constant $R>0$ such that 
$$V_0 \cap B(O,R) \subseteq \H(V^k,\s)$$
for all integers $k$ such that $k>\b+\s+1$. In particular $O$ is not isolated in $V^k$.
\end{enumerate}
\end{prop}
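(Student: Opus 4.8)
The two parts are of different nature: (i) is a statement about the differential $d_y T^kf$ being close to $d_y f$ (which is itself big near $V_0$), while (ii) is a Hausdorff-proximity statement of the same flavour as Proposition \ref{7}, but with the roles of $V$ and $V^k$ interchanged; once (i) guarantees that $T^kf$ is a genuine submersion on small balls around points of $V_0$, one can use Lemma \ref{(3)} to force $V^k$ to come close to such points. Let me describe each.

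\medskip

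\emph{Proof of (i).} By hypothesis $\b,\s$ satisfy Lemma \ref{1+2} with $A=V(f)$, $\varphi=\La f$, so $\La f(y)>\|x\|^{\b}$ for every $x\in V_0$ near $O$ and every $y\in B(x,\|x\|^{\s})$. I would then estimate $\|d_yf-d_yT^kf\|$: each entry of this matrix is a partial derivative $\partial_j f_i - \partial_j T^kf_i$, and since $T^kf_i$ is the degree-$k$ Taylor polynomial of $f_i$, the difference $\partial_j f_i-\partial_j(T^kf_i)$ vanishes to order $\ge k$ at $O$ (note $\partial_j T^kf_i$ is the degree-$(k-1)$ truncation of $\partial_j f_i$, which only helps). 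Hence for $k>\b+1$ one gets $\|d_yf-d_yT^kf\|\le C\|y\|^{k}$, and since $\|y\|\le\|x\|+\|x\|^{\s}\le 2\|x\|$ (as $\s>1$) this is $\le \|x\|^{\b+1}$ for $x$ close enough to $O$ — choosing, say, an exponent strictly between $\b+1$ and $k$ and absorbing constants. Feeding this into Proposition \ref{cont} with $\phi=f$, $\phi'=T^kf$, $\e(y)=\|x\|^{\b+1}$ gives $|\La f(y)-\La T^kf(y)|\le\|x\|^{\b+1}$, whence
$$\La T^kf(y)\ \ge\ \La f(y)-\|x\|^{\b+1}\ >\ \|x\|^{\b}-\|x\|^{\b+1}\ >\ \|x\|^{\b+1}$$
for $x$ near $O$ (the last inequality because $\|x\|^{\b}-\|x\|^{\b+1}>\|x\|^{\b+1}$ amounts to $1>2\|x\|$). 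In particular $\La T^kf(y)>0$, so $\rk(d_yT^kf)=p$ and $T^kf$ is a submersion on $B(x,\|x\|^{\s})$.

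\medskip

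\emph{Proof of (ii).} Fix $k>\b+\s+1$ and $x\in V_0$ near $O$; I want a zero of $T^kf$ inside $B(x,\|x\|^{\s})$, i.e. inside the horn $\H(V^k,\s)$ as seen from $x$. By part (i), $T^kf$ is a submersion on $\rho:=\|x\|^{\s}$-ball about $x$, and there $\La T^kf\ge\|x\|^{\b+1}$. Apply Lemma \ref{(3)} with this ball: I need $\La T^kf(y)\ge r/\rho$ for all $y\in B(x,\rho)$, and then $T^kf(B(x,\rho))\supseteq B(T^kf(x),r)$. Take $r=\|x\|^{\b+1}\cdot\rho=\|x\|^{\b+\s+1}$; the hypothesis of Lemma \ref{(3)} holds by (i). Now I must check $0\in B(T^kf(x),r)$, i.e. $\|T^kf(x)\|<r=\|x\|^{\b+\s+1}$. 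Since $x\in V$ we have $f(x)=0$, so $\|T^kf(x)\|=\|T^kf(x)-f(x)\|\le C\|x\|^{k+1}$, which is $o(\|x\|^{\b+\s+1})$ precisely because $k>\b+\s+1$; hence $\|T^kf(x)\|<\|x\|^{\b+\s+1}$ for $x$ close enough to $O$. Therefore $0\in T^kf(B(x,\rho))$, i.e. there is $y\in B(x,\|x\|^{\s})\cap V^k$, which says $x\in\H(V^k,\s)$. Since $x\in V_0$ was arbitrary near $O$, we get $V_0\cap B(O,R)\subseteq\H(V^k,\s)$ for a suitable $R>0$; and because $O$ is non-isolated in $V$ there are such $x$ arbitrarily close to $O$, each forcing a point of $V^k$ within $\|x\|^{\s}$ of it, so $O$ is non-isolated in $V^k$ as well.

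\medskip

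\emph{Main obstacle.} The only delicate bookkeeping is making the two exponent thresholds ($k>\b+1$ for (i), $k>\b+\s+1$ for (ii)) do exactly the work needed: in (i) one must be sure the Taylor-remainder bound on the \emph{differentials} really kicks in at order $k$ and dominates the gap $\|x\|^{\b}-\|x\|^{\b+1}$ uniformly over the whole ball $B(x,\|x\|^{\s})$ — this is where $\s>1$ is used, to compare $\|y\|$ with $\|x\|$ — and in (ii) one must verify the Lemma \ref{(3)} radius $r=\|x\|^{\b+\s+1}$ is simultaneously large enough to swallow $\|T^kf(x)\|$ and small enough that the constant $r/\rho=\|x\|^{\b+1}$ stays below the lower bound on $\La T^kf$. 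Both are routine once the remainder estimates of Lemma \ref{3}(2)-style are in hand, but the order of quantifiers ("for $x$ near enough to $O$", uniformly in $y\in B(x,\|x\|^{\s})$) has to be threaded carefully.
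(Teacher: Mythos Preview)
Your proposal is correct and follows essentially the same route as the paper: for (i) you use Lemma \ref{1+2} to bound $\La f(y)$ from below, a Taylor-remainder bound on $\|d_yf-d_yT^kf\|$ together with Proposition \ref{cont} to transfer this to $\La T^kf$, and the condition $\s>1$ to compare $\|y\|$ with $\|x\|$; for (ii) you apply Lemma \ref{(3)} on $B(x,\|x\|^\s)$ with $r=\|x\|^{\b+\s+1}$ and verify $\|T^kf(x)\|<r$ from the Taylor remainder and $k>\b+\s+1$. The only cosmetic differences are that the paper argues (i) by contradiction (assuming a bad sequence $(x_i,y_i)$) and uses the slightly cruder remainder bounds $\|y\|^{k-1}$ and $o(\|x\|^k)$ rather than your $C\|y\|^k$ and $C\|x\|^{k+1}$; both sets of estimates suffice for the same reason.
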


\begin{proof}
(i) Assume for contradiction that there exist a sequence
$x_i\in V_0$ converging to $O$ and a sequence $y_i\in  B(x_i,\|x_i\|^{\sigma})$
such that $\La T^kf(y_i) \leq  \|x_i\|^{\beta +1}$. Thus by Lemma \ref{1+2}  we
have
$$\frac{\La f(y_i) - \La T^kf(y_i)}{\|x_i\|^\beta}
> \frac{\|x_i\|^\beta - \|x_i\|^{\beta +1}}{\|x_i\|^\beta}= 1 - \|x_i\|.$$

On the other hand, by Taylor expansion and Proposition \ref{cont}

$$0 
\leq \frac {\La f(y_i) - \La T^kf(y_i)}{\|x_i\|^\beta}
\leq \frac{\|y_i\|^{k-1}}{\|x_i\|^\beta} 
\leq \frac{(\|y_i-x_i\|+\|x_i\|)^{k-1}}{\|x_i\|^\beta} =$$
$$=\left(\frac{\|y_i-x_i\|}{\|x_i\|^h}+\|x_i\|^{1-h}\right)^{k-1} <
\left( \|x_i\|^{\sigma-h}+\|x_i\|^{1-h}\right)^{k-1}
$$
where $h=\frac{\beta}{k-1}$. Since $\sigma> 1$ and $h<1$, we have that
$$\frac{\La f(y_i) - \La T^kf(y_i)}{\|x_i\|^\beta}$$
converges to $0$, which is a contradiction.

(ii)
Let $k$ be an integer such that $k>\b+\s+1$. Using (i)  we have that, 
if $x\in V_0$  near enough to $O$, then $\La T^kf(y)>\|x\|^{\b+1}$ for any $y\in B(x,\|x\|^\s)$.
So we can apply  Lemma \ref{(3)}  with
$\phi=T^kf$, $r=\|x\|^{\b+\s +1}$ and $\rho= \|x\|^\sigma$, obtaining that
$$T^kf(B(x,\|x\|^\sigma))\supseteq B(T^kf(x), \|x\|^{\b+\s+1}).$$
Moreover  we have
$$\lim_{\begin{array}{c}
\scriptstyle z \to O \\ \scriptstyle z\in V_0 \end{array}}
\frac{\|T^kf(z)\|}{\|z\|^k}= \lim_{\begin{array}{c}
\scriptstyle z \to O \\ \scriptstyle z\in V_0 \end{array}}
\frac{\|T^kf(z)-f(z)\|}{\|z\|^k}=0.$$
As a consequence if $x\in V_0$ is sufficiently near $O$, then $\|T^kf(x)\|<\|x\|^k <
\|x\|^{\b+\s+1}$ and hence $O$ belongs to 
$B(T^kf(x), \|x\|^{\b+\s+1})$; so there exists
$y\in B(x,\|x\|^{\sigma})$ such that $T^kf(y)=O$. Then near $O$ we have that $V_0 \subseteq \H(V^k,\sigma)$
and our thesis is proved.
\end{proof}

As an immediate consequence of Proposition \ref  {7}   and  Proposition \ref{5}  we obtain 

\begin{cor}\label{week-equiv} Let $f\colon\R^n\to \R^p$  be an analytic map which defines an IS. Then  $V(f)\sim_s V(T^k f)$ for $k$ sufficienty large.
\end{cor}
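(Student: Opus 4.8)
The statement asks for $V(f)\sim_s V(T^kf)$ for $k$ large, and the plan is simply to assemble the two inclusions of Proposition \ref{horn-lemma} from the two propositions just proved. Recall that by Proposition \ref{horn-lemma}, to get $V^k\leq_s V$ it suffices to exhibit $R>0$ and an exponent $\sigma>s$ with $(V^k\setminus\{O\})\cap B(O,R)\subseteq\H(V,\sigma)$, and symmetrically for $V\leq_s V^k$ it suffices to have $V_0\cap B(O,R)\subseteq\H(V^k,\sigma)$; combining the two gives $V\sim_s V^k$.

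First I would fix the Łojasiewicz exponents: let $\al$ be an exponent satisfying Lemma \ref{3}(1) for $f$, and let $\b,\s$ be exponents satisfying Lemma \ref{1+2} applied to $A=V(f)$ and $\varphi=\La f$ (this is legitimate since $f$ defines an IS, so $\La f$ is a continuous subanalytic function that is positive on $V(f)\setminus\{O\}$); by Remark \ref{rem1} we may enlarge $\s$ so that $\s>s$ and in particular $\s>1$, as required by Proposition \ref{5}. Now choose a single integer $k$ large enough to satisfy simultaneously $k>\al\s$ (the hypothesis of Proposition \ref{7}) and $k>\b+\s+1$ (the hypothesis of Proposition \ref{5}(ii)); any $k$ exceeding $\max\{\al\s,\ \b+\s+1\}$ works.

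With such a $k$, Proposition \ref{7} yields $R_1>0$ with $(V^k\setminus\{O\})\cap B(O,R_1)\subseteq\H(V,\s)$, hence $V^k\leq_s V$ by Proposition \ref{horn-lemma} (note $\s>s$). Proposition \ref{5}(ii) yields $R_2>0$ with $V_0\cap B(O,R_2)\subseteq\H(V^k,\s)$, hence $V\leq_s V^k$; moreover the same proposition guarantees that $O$ is non-isolated in $V^k$, so both sets are genuinely non-isolated at $O$ and the equivalence relation is being applied in its meaningful case. Taking $R=\min\{R_1,R_2\}$ and invoking Proposition \ref{horn-lemma} in both directions gives $V(f)\sim_s V(T^kf)$.

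There is essentially no obstacle here: the corollary is a bookkeeping exercise in matching the exponent thresholds of the two propositions against a single choice of $k$, the only mild point being to remember to push $\s$ above $s$ at the outset (using Remark \ref{rem1}) so that the horn exponent certifying $s$-equivalence is strictly larger than $s$.
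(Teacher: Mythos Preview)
Your proposal is correct and follows exactly the approach the paper intends: the paper simply states that the corollary is an immediate consequence of Proposition~\ref{7} and Proposition~\ref{5}, and you have faithfully unpacked that by fixing $\alpha,\beta,\sigma$ (enlarging $\sigma$ past $s$ via Remark~\ref{rem1}), choosing $k>\max\{\alpha\sigma,\ \beta+\sigma+1\}$, and invoking Proposition~\ref{horn-lemma} in both directions. Your remark that Proposition~\ref{5}(ii) also ensures $O$ is non-isolated in $V^k$ is a nice touch of care.
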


\section{Main theorem}\label{main-proof}

We are now ready to prove our main result.

\begin{teo}\label{main}  Let $f\colon\R^n\to \R^p$  be an analytic map which defines an IS. 
If $s\geq 1$, there exists $k_0\in \N$ such that for all integers $k\geq k_0$ the map $T^kf$ defines an IS and $V(f)\approx_s V(T^kf)$.
\end{teo}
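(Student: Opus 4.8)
The plan is to fix $s\ge 1$ and produce $k_0$ so that for $k\ge k_0$: (a) $T^kf$ defines an IS, and (b) both tangential horn-neighbourhood inclusions of Definition \ref{super-s} hold with some exponent $\tau>s$. Step (a) is already essentially available: by Proposition \ref{5}(ii), for $k$ large $O$ is non-isolated in $V^k$, and by Proposition \ref{5}(i), $T^kf$ is a submersion on a whole ball around each point $x\in V_0$ near $O$; a symmetric Łojasiewicz argument (applied to $T^kf$, whose relevant data converge to those of $f$) gives submersivity of $T^kf$ on $V^k_0$ near $O$, so $T^kf$ defines an IS and $V^k$ is an IS of dimension $d=n-p$. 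The substance of the theorem is therefore the tangential part.

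For the tangential inclusions I would work with the Gauss map. For $x\in V_0$ the normal space $\nu(T_xV)$ is spanned by $\na f_1(x),\dots,\na f_p(x)$, and similarly for $V^k$ using $\na(T^kf)_i$; Gram–Schmidt turns these into orthonormal (hence unit) bases. The key estimate is that replacing $\na f_i$ by $\na(T^kf)_i$ perturbs these spanning vectors by $O(\|x\|^{k-1})$, and — crucially — that near $O$ the vectors $\na f_i(x)$ stay uniformly linearly independent: this follows from $\La f(x)>\|x\|^\beta$ on $V_0$ (Lemma \ref{1+2} with $A=V(f)$, $\varphi=\La f$), which bounds below the smallest singular value of the $p\times n$ Jacobian. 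Consequently the orthonormalized normal bases of $V$ and $V^k$ at the same point differ by $O(\|x\|^{k-1-c\beta})$ for a fixed constant $c$, so $\D(T_xV, T_xV^k)<\|x\|^{\tau}$ once $k$ is large, for any prescribed $\tau>s$. This handles the comparison of tangent planes \emph{at a common point}.

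It then remains to move the base point. Given $(x,T_xV)\in\T_R V$, Proposition \ref{5}(ii) (its proof via Lemma \ref{(3)}) produces $y\in V^k_0$ with $\|y-x\|<\|x\|^\sigma$; I take $T:=T_xV$ (note $x\in T$ trivially) and must check $\D(T_yV^k,T_xV)<\|x\|^\tau$. Write this via $\D(T_yV^k,T_yV)<\|y\|^{\tau'}$ (the common-point estimate above, at $y$, using $\|y\|\sim\|x\|$) plus $\D(T_yV,T_xV)<\|x\|^{\tau''}$ (continuity of the Gauss map of $V$ with the Łojasiewicz-type modulus from Lemma \ref{3}(2), applied on $B(x,\|x\|^\sigma)$ exactly as in the proof of Proposition \ref{5}(i)), then use the triangle-type inequality for $\D$ and absorb constants by choosing $k$ large; this gives $\T_R V\subseteq\T\H(V^k,\tau)$. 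The reverse inclusion $\T_R V^k\subseteq\T\H(V,\tau)$ is symmetric, using Proposition \ref{7} to land from $V^k_0$ back into a horn-neighbourhood of $V$ and the same Gauss-map comparison, with the caveat that the lower bound on $\La T^kf$ near points of $V^k_0$ must be obtained independently (again by Lemma \ref{1+2}/Łojasiewicz applied to $T^kf$, or by transferring the bound for $f$ along the short segment and invoking Proposition \ref{cont}).

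The main obstacle I anticipate is the uniformity of the normal-frame comparison: one needs the estimate $\D(T_yV^k,T_yV)<\|y\|^{\tau}$ to hold \emph{uniformly in $y\in V^k_0$ near $O$} (not just on $V_0$), which forces a lower bound on $\La T^kf$ on $V^k_0$ and an honest tracking of how Gram–Schmidt propagates the $O(\|x\|^{k-1})$ perturbation through the division by quantities bounded below by a power of $\|x\|$. Once the single clean inequality "for every $\tau$ there is $k_0$ with $\D(T_yV^k, T_{y'}V)<\|y\|^\tau$ whenever $y\in V^k_0$, $y'\in V_0$, $\|y-y'\|<\|y\|^\sigma$, $y$ near $O$" (and its mirror image) is established, assembling it with Propositions \ref{7} and \ref{5} to verify Definition \ref{super-s} is routine, and choosing $k_0$ amounts to taking a finite maximum of the thresholds arising from $\alpha,\beta,\gamma,\sigma$ and the target $\tau>s$.
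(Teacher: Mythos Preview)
Your overall strategy matches the paper's: fix exponents from Lemmas \ref{3} and \ref{1+2}, use Propositions \ref{7} and \ref{5} to pair each point of $V_0$ with a nearby point of $V^k_0$ (and vice versa), then compare normal frames. Two simplifications in the paper's execution are worth noting. First, since $\D$ is defined via \emph{unit} bases rather than orthonormal ones, Gram--Schmidt is unnecessary: the paper simply normalizes each $\na f_i(x)$ and each $\na T^kf_i(y)$, controlling the normalized difference via the elementary inequality $\bigl\|\tfrac{u}{\|u\|}-\tfrac{v}{\|v\|}\bigr\|\le 2\|u-v\|/\|u\|$ together with lower bounds $\|\na f_i\|>\|x\|^{\beta_i}$ obtained by applying Lemma \ref{1+2} to each $\varphi=\|\na f_i\|$ separately (not only to $\La f$). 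Second, the paper bypasses your two-step ``common point plus move base point'' decomposition and any triangle-type inequality for $\D$: it directly bounds
\[
\|\na f_i(x)-\na T^kf_i(y)\|\le \|\na f_i(x)-\na f_i(y)\|+\|\na f_i(y)-\na T^kf_i(y)\|\le \|x-y\|^\gamma+\|y\|^{k-1}
\]
in one stroke, which after normalization yields $\D(T_xV,T_yV^k)<\|x\|^{\tau}$ for an explicit $\tau=\eta-\beta>s$. Your route would work but is heavier and requires justifying that $\D$ is comparable to a genuine metric. One point you leave vague is why $T^kf$ is submersive on $V^k_0$ itself (not merely on the balls $B(x,\|x\|^\sigma)$ centered on $V_0$): the paper closes this with a regular-situation argument showing that $\H(V,\mu)\subseteq\bigcup_{x\in V}B(x,\|x\|^\sigma)$ for some $\mu$, after which Proposition \ref{7} with $k>\alpha\mu$ forces $V^k_0$ into that union; your phrase ``symmetric \L ojasiewicz argument applied to $T^kf$'' does not quite capture this step.
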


\begin{proof} As in the previous section, set $f=(f_1,\dots ,f_p)$, $V=V(f)$, $V_0= V\setminus \{O\}$ and $V^k=V(T^kf)$.

At first let us prove that we can find $k_0\in\N$ such that for any integer $k\geq k_0$ the map $T^kf$ defines an IS and there exist $R>0$ and  $\tau > s$ such that  $\T_R V\subseteq \T\H( V^k,\tau)$.

Take $\al, \g$ so that they fulfill the thesis of Lemma \ref{3}. Since by hypothesis $f$ defines an IS, then $\La f$ is positive on $V_0$, so let $\b_0, \s_0$ be exponents which satisfy  Lemma \ref{1+2} when we take $A=V(f)$ and $\varphi=\La f$.

Moreover, for $i=1,\dots ,p$ apply Lemma \ref{1+2} with $A=V(f)$ and $\varphi =\| \nabla f_i\|$ to get exponents $\b_i$ and $\s_i$. If we set $\b=\max\{\b_i\ |\ i=0,\dots, p\}$ and
$\s=\max\{\s_i\ |\ i=0,\dots, p\}$,
by Remark \ref{rem2} and Remark \ref{rem1} we can assume that $\g \leq 1$ and
$\s>\frac{\b+s}{\g}>1$.

At first let us prove that there exists an integer $k_0$ such that $T^kf$ defines an IS for all $k\geq k_0$.
Namely, if we consider $U=\bigcup_{x\in V} B(x,\|x\|^\sigma)$, the sets $V$ and $W=\R^n \setminus U$ are subanalytic and 
meet only at $O$, so they are regularly situated, 
i.e. there exists $\mu$ such that $d(x,V) + d(x, W) > \|x\|^{\mu}$ 
for all $x$ near $O$. In particular if $x\in \H(V,\mu)$ then $d(x,W)>0$ and hence $x\in U$, 
i.e. $\H(V,\mu)\ \subseteq U$. 

Let $k_0$ be the integer part of $\max\{\al\s,\b+\s+1, \alpha\mu\}+1$.

Then for any $k \geq k_0$ by Proposition \ref{5} (ii), $O$ is not isolated in $V^k$; moreover, since $k > \alpha \mu$, by Proposition \ref{7}, $V^ k\setminus \{O\}\subseteq \H(V,\mu)\subseteq U$ near $O$. 
This implies that for any $y\in V^ k\setminus \{O\}$ there exists $x\in V$ such that 
$y\in B(x, \|x\|^\s)$; thus   by Proposition \ref{5} (i)
we have that $\La T^kf$ does not vanish on $V^ k\setminus \{O\}$, and therefore the map $T^kf$ defines an IS. 
\smallskip

Moreover for any $k \geq k_0$, again  by Proposition \ref{5} (ii), we have that $V_0 \cap B(O,R) \sseteq \H(V^k,\s)$ for some $R$; so, for each $x\in V_0\cap B(O,R) $ there is $y\in V^k \setminus \{O\}$ such that
$\|x-y\|<\|x\|^\s$, and hence $\|y\| < \|x\| +\|x\|^\s$. Then $y\in B(x, \|x\|^\sigma)$ and, since 
$k>\beta +1$, by 
Proposition \ref{5} (i) $T^kf$ is submersive at $y$. For such $x,y$ let us estimate $\D(T_xV,T_yV^k)$. 

The vector spaces $\nu(T_xV)$ and $\nu(T_yV^k)$ have bases $B_x=\{ \na f_i(x)\ |\ i=1, \ldots, p\}$ and $B^k_y=\{\na T^kf_i(y)\ |\ i=1, \ldots, p\}$ respectively; then, by Lemma \ref{3} and Taylor expansion,  we have for $i=1, \dots ,p$
$$\| \na f_i(x)-\na T^kf_i(y)\|\leq \| \na f_i(x)- \na f_i(y)\|+\| \na f_i(y)- \na T^kf_i(y)\|\leq \| x-y\|^\g +  \| y\|^{k-1}.$$

Since $\gamma \leq 1$ and $k \geq k_0 > \sigma +1$, then 
$k- \g\s-1>0$; thus  from the previous inequalities we get that near $O$
$$\|\na f_i(x)-\na T^kf_i(y)\|< \|x\|^{\g\s} + (\|x\| +\|x\|^\s)^{k-1}=$$
$$= \|x\|^{\g\s} \left( 1+ \|x\| ^{k- \g\s-1} ( 1 +\|x\| ^{\s -1})^{k-1}\right)\leq
2\|x\|^{\g\s}.$$

Since $\b+s<\g\s$, we can choose $\eta$ such that $\b+s<\eta<\g\s$; so  we can assume that
$$\| \na f_i(x)-\na T^kf_i(y)\|< \frac{1}{2}\|x\|^\eta.$$

We want to estimate $\Delta(\B_x,\B^k_y)$ where $\B_x$ and $\B^k_y$ are the unit bases obtained from $B_x$ and $B^k_y$ respectively by normalizing their elements. 

Observe that, if $u,\ v\in\R^n\setminus\{ O\}$, 
$$\left\| \frac{u}{\| u\|} - \frac{v}{\| v\|} \right\| =
\left\| \frac{\|v\|u - \| u\|v}  {\|u\|\| v\|} \right\| =
 \left\| \frac{\|v\|u -\|v\|v + \|v\|v - \| u\|v}  {\|u\|\| v\|} \right\| \leq 
\frac {   \|u-v\| +  \Big| \|v\|-\|u\|   \Big|} {  \|u\|} 
$$
hence the following inequality holds:
\mk
\begin{equation}\label{eqn:norme}
\left\| \frac{u}{\| u\|} - \frac{v}{\| v\|} \right\| \leq 2\frac{\| u-v\|}{\| u\|}.
\end{equation}
\mk

Applying inequality (\ref{eqn:norme}) and Lemma \ref{1+2} to $u=\na f_i(x)$ and $v=\na T^kf_i(y)$, we get

$$\left\| \frac{\na f_i(x)}{\| \na f_i(x)\|} - \frac{\na T^kf_i(y)}{\| \na T^kf_i(y)\|}\right\| 
<  \|x\|^{\eta-\b _i}\leq  \|x\|^{\eta-\b}.$$
\mk

Since $s<\eta-\b<\g\s-\b<\s$ and since $\|x-y\|<\|x\|^\s$, if we take $\tau=\eta-\b$ we get that
$\|x-y\| < \|x\|^\tau$  and $\Delta  (T_x V, T_y V^k) \leq \Delta (\B_x,\B^k_y)< \|x\|^\tau$. Hence 
$\T_R V\subseteq \T\H( V^k,\tau)$.
\bk

We  show now that, up to reducing $R$, we have that  $\T_R V^k\subseteq \T\H( V,\tau)$ with the same $k_0$ and $\tau$ as above.

By Proposition \ref{7}, if $k\geq k_0$, for any $x\in (V^k \setminus \{O\})\cap B(O,R)$ there is $y\in V_0$ such that
$\|x-y\|<\|x\|^\s < \|x\|^\tau$.
Since
$$\| \na f_i(y)-\na T^kf_i(x)\|\leq \| \na f_i(y)- \na f_i(x)\|+\| \na f_i(x)- \na T^kf_i(x)\|,$$
by  computations and arguments analogous to the ones used in the previous part
of our proof we can deduce that:

\begin{equation}\label{due}
\| \na f_i(y)-\na T^kf_i(x)\|
\leq\| x-y\|^\g +  \| x\|^{k-1}< \|x\|^{\g\s}+\|x\|^{k-1}.
\end{equation}

Since $y\in B(x,\|x\|^\s)$, we have $\|y\|>\|x\|-\|x\|^\s$. Moreover, since  $y\in V_0$, by Lemma \ref{1+2} we have that $\|\na f_i(y)\|>\|y\|^\b$.
 So from inequalities (\ref{eqn:norme}) and  (\ref{due}) we obtain that (near $O$)

$$\left\| \frac{\na f_i(y)}{\| \na f_i(y)\|} - \frac{\na T^kf_i(x)}{\| \na T^kf_i(x)\|}\right\|\leq 2\frac{\|\na f_i(y)-\na T^kf_i(x)\|}{\|\na f_i(y)\|}< 2\frac{\|x\|^{\g\s}+\|x\|^{k-1}}{\|y\|^\b}\leq $$

$$\leq  2\frac{\|x\|^{\g\s}+\|x\|^{k-1}}{(\|x\|-\|x\|^\s)^\b}=2\|x\|^{\g\s-\b}\frac{1+\|x\|^{k-\g\s-1}}{(1-\|x\|^{\s-1})^\b}\leq 3\|x\|^{\g\s-\b}\leq \|x\|^{\eta-\b}=\|x\|^\tau.$$
\mk

Hence $\T_R V^k\sseteq \T\H(V,\tau)$ and so $V\approx_s V^k$ for $k\geq k_0$.
\end{proof}

\end{document}